\documentclass[a4paper,10.5pt]{article}
\usepackage{amsmath,amssymb,amsthm}

\usepackage{color}
%
\oddsidemargin=0pt
\evensidemargin=0pt
\topmargin=-7mm
\headsep=18pt
\textheight=230mm
\textwidth=159.3mm
\pagestyle{plain}
\theoremstyle{definition}
 \newtheorem{dfn}{Definition}[section]
 \newtheorem{remark}[dfn]{Remark}

\theoremstyle{plain}
 \newtheorem{thm}[dfn]{Theorem}
   
 \newtheorem{lem}[dfn]{Lemma}

\numberwithin{equation}{section}

\newcommand{\bD}{{\bold D}}

\newcommand{\bG}{{\bold G}}
\newcommand{\bH}{{\bold H}}
\newcommand{\bI}{{\mathbb I}}

\newcommand{\bK}{{\bold K}}
\newcommand{\bL}{{\bold L}}
\newcommand{\bM}{{M}}

\newcommand{\bS}{{\bold S}}
\newcommand{\bT}{{\bold T}}

\newcommand{\DV}{{\rm Div}\,}
\newcommand{\dv}{\, {\rm div}\,}

\newcommand{\BR}{{\Bbb R}}
\newcommand{\BC}{{\Bbb C}}

\newcommand{\BN}{{\Bbb N}}

\newcommand{\CF}{{\mathcal F}}
\newcommand{\CI}{{\mathcal I}}

\newcommand{\CL}{{\mathcal L}}

\newcommand{\CN}{{\mathcal N}}

\newcommand{\ba}{{\bold a}}
\newcommand{\bb}{{\bold b}}

\newcommand{\bff}{{\bold f}}
\newcommand{\bv}{{\bold v}}
\newcommand{\bu}{{\bold u}}
\newcommand{\bw}{{\bold w}}
\newcommand{\bg}{{\bold g}}
\newcommand{\bm}{{\bold m}}

\newcommand{\pd}{\partial}

\newcommand{\R}{\mathbb{R}}

\title{The global well-posedness of the compressible 
fluid model of Korteweg type for the critical case}
\author{
Takayuki KOBAYASHI
\thanks{Division of Mathematical Science, 
Department of Systems Innovation, 
Graduate School of Engineering Science, \endgraf
Osaka University,
1-3, Machikaneyama-cho, Toyonaka-shi, Osaka,
560-8531, Japan. \endgraf
e-mail address: kobayashi@sigmath.es.osaka-u.ac.jp}
\enskip and \enskip
Miho MURATA
\thanks{
Department of Mathematical and System Engineering,
Faculty of Engineering,
Shizuoka University, \endgraf
3-5-1 Johoku, Naka-ku, Hamamatsu-shi, Shizuoka,
432-8561, Japan.
\endgraf
e-mail address: murata.miho@shizuoka.ac.jp}
}
\date{}

\begin{document}
\maketitle

\begin{abstract}
In this paper, we consider the compressible 
fluid model of Korteweg type
in a critical case where the derivative of pressure equals to $0$ at the given
constant state.
It is shown that the system admits a unique, global strong solution 
for small initial data in the maximal $L_p$-$L_q$ regularity class.
As a result, we also prove the decay estimates of the solutions to the nonliner problem.
In order to obtain the global well-posedness for the critical case, we show 
$L_p$-$L_q$ decay properties of solutions to the linearized equations 
under an additional assumption for a low frequencies.
\end{abstract}

\section{Introduction}
We consider the following compressible viscous 
fluid model of Korteweg type in the $N$ dimensional 
Euclidean space $\BR^N$, $3\leq N \leq 7$. 

\begin{equation}\label{nsk}
\left\{
\begin{aligned}
&\pd_t \rho + \dv (\rho \bu) = 0 &\quad&\text{in $\R^N$ for $t \in (0, T)$}, \\
&\rho (\pd_t \bu + \bu \cdot \nabla \bu)  
- \DV \bT + \nabla P(\rho) =0 & \quad&\text{in $\R^N$ for $t \in (0, T)$}, \\
&(\rho, \bu)|_{t=0} = (\rho_* + \rho_0, \bu_0) &\quad&\text{in $\R^N$},
\end{aligned}
\right.
\end{equation}
where $\pd_t = \pd/\pd t$, $t$ is the time variable, 
$\rho = \rho(x, t)$, $x=(x_1, \ldots, x_N) \in \BR^N$
and  
$\bu = \bu(x, t) = (u_1(x, t), \ldots, u_N(x, t))$
are respective unknown density field and velocity field, 
$P(\rho)$ is the pressure field 
satisfying a $C^\infty$ function defined on
$\rho > 0$, 
where $\rho_*$ is a positive constant.
Moreover, $\bT = \bS (\bu) + \bK (\rho)$
is the stress tensor, where $\bS(\bu)$ and 
$\bK(\rho)$ are respective the viscous stress tensor and 
Korteweg stress tensor given by 
\begin{align*}
\bS (\bu) &= \mu_* \bD(\bu) + 
(\nu_* - \mu_*) \dv \bu \bI, \\
\bK (\rho) &= \frac{\kappa_*}{2} (\Delta \rho^2 -  |\nabla \rho|^2 )\bI 
- \kappa_* \nabla \rho \otimes \nabla \rho.
\end{align*} 
Here, 
$\bD(\bu)$ denotes 
the deformation tensor whose $(j, k)$ components are
 $D_{jk}(\bu) = \pd_ju_k
+ \pd_ku_j$ with $\pd_j
= \pd/\pd x_j$.
For any vector of functions $\bv = (v_1, \ldots, v_N)$, 
we set $\dv \bv = \sum_{j=1}^N\pd_jv_j$,
and also for any $N\times N$ matrix field $\bL$ with $(j,k)^{\rm th}$ components $L_{jk}$, 
the quantity $\DV \bL$ is an 
$N$-vector with $j^{\rm th}$ component $\sum_{k=1}^N\pd_kL_{jk}$.
$\bI$ is the $N\times N$ identity matrix
and
$\ba \otimes \bb$ denotes an $N\times N$ matrix with $(j, k)^{\rm th}$
component $a_j b_k$
for any two $N$-vectors $\ba = (a_1, \dots, a_N)$ and $\bb = (b_1, \dots, b_N)$.  
We assume that the viscosity coefficients $\mu_*$, $\nu_*$,
 the capillary coefficient $\kappa_*$, and the mass density $\rho_*$ of the
 reference body satisfy
the conditions:

\begin{equation}\label{condi} 
\mu_* > 0, \quad \mu_* + \nu_*>0, \quad \text{and} \quad \kappa_* > 0.
\end{equation}
Furthermore, we assume that the pressure $P(\rho)$ satisfies 
\begin{equation}\label{pressure} 
 P'(\rho_*) = 0.
\end{equation}

The system \eqref{nsk} governs 
the motion of the compressible fluids with 
capillarity effects, which was proposed by 
Korteweg \cite{K} as a diffuse interface model for liquid-vapor flows
based on Van der Waals's approach \cite{Wa}
and derived rigorously by Dunn and Serrin in \cite{DS}.
As shown in \cite{D}, 
since the Korteweg model was drived by using on Van der
Waals potential,
the pressure is non-monotone in general.
This is one of the important aspects to the diffuse interface model,
so that we consider the system \eqref{nsk} under the condition \eqref{pressure}.

There are many mathematical results on global solutions 
of Korteweg model in the case $P'(\rho) > 0$.
Bresch, Desjardins, and Lin \cite{BDL} 
proved the existence of global weak solution,
 and then Haspot improved their result in \cite{H}.
Hattori and Li \cite{HL1, HL2} first showed
the local and global unique existence 
in Sobolev space. 
They assumed the initial data 
$(\rho_0, \bu_0)$ belong to $H^{s + 1} (\BR^N) \times H^s (\BR^N)^N$
$(s \geq [N/2] + 3)$.
Hou, Peng, and Zhu \cite{HPZ} improved the results \cite{HL1, HL2}
when the total energy is small. 
Wang and Tan \cite{WT}, 
Tan and Wang \cite{TW}, 
Tan, Wang, and Xu \cite{TWX}, and 
Tan and Zhang \cite{TZ} 
established the optimal decay rates of the global solutions
in Sobolev space.
Li \cite{L} and Chen and Zhao \cite{CZ} 
considered Navier-Stokes-Korteweg system with external force.
Bian, Yeo, and Zhu \cite{BYZ} obtained 
the vanishing capillarity limit of the smooth solution.
We also refer to the existence and uniqueness results 
in critical Besov space proved by Danchin and Desjardins in \cite{DD}.
Their initial data $(\rho_0, \bu_0)$ are assumed to belong to
$\dot{B}^{N/2}_{2,1}(\BR^N) \cap \dot{B}^{N/2-1}_{2,1}(\BR^N)
 \times \dot{B}^{N/2-1}_{2,1}(\BR^N)^N$.
Recently, Murata and Shibata \cite{MS} proved the global well-posedness
in the maximal $L_p$-$L_q$ regularity class.

On the other hand, there are few results in the case where $P'(\rho_*) = 0$.
Kobayashi and Tsuda \cite{KT} proved the existence of global $L_2$ solutions 
and the decay estimates.
Chikami and Kobayashi \cite{CK} improved the result \cite{DD}.
In particular, when $P'(\rho_*) = 0$, 
they proved the global estimates
under an additional low frequency assumption
to control a pressure term.
Furthermore, they showed the optimal decay rates
of the global solutions
in the $L_2$-framework.
Recently, Watanabe \cite{W} proved 
the global well-posedness in the maximal $L_p$-$L_q$ regularity class including the $L_2$-framework
under the condition \eqref{condi} and the additional assumption $(\mu_* + \nu_*)^2/\rho_*^2 \geq 4 \rho_* \kappa_*$.
It is not clear about the decay estimates of the solutions to the nonlinear problem \eqref{nsk}.

In this paper, we discuss the global existence and uniqueness
of the strong solutions to \eqref{nsk}
for small initial data
under the assumption \eqref{pressure}.
As a result, we also prove the decay estimates of the solutions to \eqref{nsk}.
The main tools are the maximal $L_p$-$L_q$ regularity
and $L_p$-$L_q$ decay properties of the linearized equations. 
In order to consider the linearized problem,
we first rewrite \eqref{nsk} to the momentum formulation,
which helps our analysis when $P'(\rho_*) = 0$. 
In fact, we assume that the initial data for the momentum has divergence form
in order to obtain the suitable decay
properties of  the low frequency part of solutions to the linearized equations
as shown in Theorem \ref{semi}, below. 
Since the nonlinear terms of the momentum formulation
is written in divergence form,  
we can use Theorem \ref{semi}.
Moreover, thanks to the condition for the initial data for the momentum,
we do not need the additional assumption for the coefficients as in \cite{W}.

\subsection{Notations}
We summarize several symbols and functional spaces used 
throughout the paper.
$\BN$, $\BR$ and $\BC$ denote the sets of 
all natural numbers, real numbers and complex numbers, respectively. 
We set $\BN_0=\BN \cup \{0\}$ and $\BR_+ = (0, \infty)$. 
Let $q'$ be the dual exponent of $q$
defined by $q' = q/(q-1)$
for $1 < q < \infty$. 
For any multi-index $\alpha = (\alpha_1, \ldots, \alpha_N) 
\in \BN_0^N$, we write $|\alpha|=\alpha_1+\cdots+\alpha_N$ 
and $\pd_x^\alpha=\pd_1^{\alpha_1} \cdots \pd_N^{\alpha_N}$ 
with $x = (x_1, \ldots, x_N)$. 
For scalar function $f$ and $N$-vector of functions $\bg$, we set
\begin{gather*}
\nabla f = (\pd_1f,\ldots,\pd_Nf),
\enskip \nabla \bg = (\pd_ig_j \mid i, j = 1,\ldots, N),\\
\nabla^2 f = \{\pd_i \pd_j f \mid i, j = 1,\ldots, N \},
\enskip \nabla^2 \bg = \{\pd_i \pd_j g_k \mid
i, j, k = 1,\ldots,N\},
\end{gather*} 
where $\pd_i = \pd/\pd x_i$. 
For scalar functions,  $f,g$,  and $N$-vectors of functions, 
$\bff$, $\bg$,  we set 
$(f, g)_{\BR^N} = \int_{\BR^N} f g\,dx$, and 
$(\bff,\bg)_{\BR^N} = \int_{\BR^N} \bff\cdot \bg\,dx$, respectively. 
For Banach spaces $X$ and $Y$, $\CL(X,Y)$ denotes the set of 
all bounded linear operators from $X$ into $Y$ and 
$\rm{Hol}\,(U, \CL(X,Y))$ 
 the set of all $\CL(X,Y)$ valued holomorphic 
functions defined on a domain $U$ in $\BC$. 
For any $1 \leq p, q \leq \infty$,
$L_q(\BR^N)$, $W_q^m(\BR^N)$ and $B^s_{q, p}(\BR^N)$ 
denote the usual Lebesgue space, Sobolev space and 
Besov space, 
while $\|\cdot\|_{L_q(\BR^N)}$, $\|\cdot\|_{W_q^m(\BR^N)}$ and 
$\|\cdot\|_{B^s_{q,p}(\BR^N)}$ 
denote their norms, respectively. We set $W^0_q(\BR^N) = L_q(\BR^N)$
and $W^s_q(\BR^N) = B^s_{q,q}(\BR^N)$. 
$C^\infty(\BR^N)$ denotes the set of all $C^\infty$ functions defined on $\BR^N$. 
$L_p((a, b), X)$ and $W_p^m((a, b), X)$ 
denote the usual Lebesgue space and Sobolev space of 
$X$-valued function defined on an interval $(a,b)$, respectively.
The $d$-product space of $X$ is defined by 
$X^d=\{f=(f, \ldots, f_d) \mid f_i \in X \, (i=1,\ldots,d)\}$,
while its norm is denoted by 
$\|\cdot\|_X$ instead of $\|\cdot\|_{X^d}$ for the sake of 
simplicity. 
We set 
\begin{gather*}
W_q^{m,\ell}(\BR^N)=\{(f,\bg) \mid  f \in W_q^m(\BR^N),
\enskip \bg \in W_q^\ell(\BR^N)^N \}, \enskip 
\|(f, \bg)\|_{W^{m, \ell}_q(\BR^N)} = \|f\|_{W^m_q(\BR^N)}
+ \|\bg\|_{W^\ell_q(\BR^N)}.
\end{gather*}
Let $\CF_x= \CF$ and $\CF^{-1}_\xi = \CF^{-1}$ 
denote the Fourier transform and 
the Fourier inverse transform, respectively, which are defined by 
 setting
$$\hat f (\xi)
= \CF_x[f](\xi) = \int_{\BR^N}e^{-ix\cdot\xi}f(x)\,dx, \quad
\CF^{-1}_\xi[g](x) = \frac{1}{(2\pi)^N}\int_{\BR^N}
e^{ix\cdot\xi}g(\xi)\,d\xi. 
$$
 The letter $C$ denotes generic constants and the constant 
$C_{a,b,\ldots}$ depends on $a,b,\ldots$. 
The values of constants $C$ and $C_{a,b,\ldots}$ 
may change from line to line. We use small boldface letters, e.g. $\bu$ to 
denote vector-valued functions and capital boldface letters, e.g. $\bH$
to denote matrix-valued functions, respectively. 
In order to state our main theorem, 
we set a solution space
and several norms:
\begin{align}
D_{q, p}(\BR^N)
& = B^{3-2/p}_{q, p}(\BR^N) \times B^{2(1-1/p)}_{q, p}(\BR^N)^N, \nonumber \\
X_{p, q, t} &= \{(\rho, \bu) 
 \mid \rho \in L_p((0, t), W^3_q(\BR^N)) \cap W^1_p((0, t), W^1_q(\BR^N)), \nonumber \\
& \bu \in 
L_p((0, t), W^2_q(\BR^N)^N) \cap W^1_p((0, t), L_q(\BR^N)^N),
\quad
\rho_*/4 \leq \rho_* + \rho(t, x) \leq 4 \rho_*\}, \nonumber \\
[ U ]_{q, \ell, (a, t)}
&= \sup_{a \leq s \leq t} <s>^\ell \|U (\cdot, s)\|_{L_q(\BR^N)}
\enskip (U = \rho, \bu, (\rho, \bu). a = 0, 2.), 
\nonumber \\ 
[ \nabla U ]_{q, \ell, (a, t)}
&= \sup_{a \leq s \leq t} <s>^\ell \|\nabla U (\cdot, s)\|_{L_q(\BR^N)}
\enskip (U = \rho, (\rho, \bu). a = 0, 2.), 
\nonumber \\ 
[ U ]_{q, \ell, t}
&
=[ U ]_{q, \ell, (0, t)}, \enskip
[ \nabla U ]_{q, \ell, t}
= [ \nabla U ]_{q, \ell, (0, t)}, \nonumber \\ 
\CN (\rho, \bu) (t)
&=\sum^1_{j=0} \sum^2_{i=1}
\{ [ (\nabla^j \rho, \nabla^j \bu) ]_{\infty, \frac{N}{q_1} + \frac{j}{2}, t} \label{N} \\
&+ [ (\nabla^j \rho, \nabla^j \bu) ]_{q_1, \frac{N}{2q_1} + \frac{j}{2}, t}
+ [ (\nabla^j \rho, \nabla^j \bu) ]_{q_2, \frac{N}{2q_2}+1 + \frac{j}{2}, t} \nonumber \\
&+ \|(<s>^{\ell_i}(\rho, \bu)\|_{L_p((0, t), W^{3, 2}_{q_i}(\BR^N))}\nonumber \\
&+ \|<s>^{\ell_i}(\pd_s \rho, \pd_s \bu)\|_{L_p((0, t), W^{1, 0}_{q_i}(\BR^N))} \}, \nonumber 
\end{align}
where  $<s> = (1 + s)$, $\ell_1 = N/2q_1 - \tau$, $\ell_2 = N/2q_2 + 1 - \tau$,
and $\tau$ is given in Theorem \ref{global}, below.

\subsection{Main theorem}

Setting $\bm = \rho \bu$ and $\rho = \rho_* + \theta$,
we can rewrite \eqref{nsk} to the momentum formulation:
\begin{equation}\label{nsk4}\left\{
\begin{aligned}
&\pd_t \theta + \dv \bm = 0 & \quad&\text{in $\R^N$ for $t \in (0, T)$},  \\
&\pd_t \bm - \frac{1}{\rho_*} \DV \bS(\bm) -\kappa_* \rho_* \nabla \Delta \theta
= \bg(\theta, \bm) & \quad&\text{in $\R^N$ for $t \in (0, T)$}, \\
&(\theta, \bm)|_{t=0} = (\rho_0, \bm_0) & \quad&\text{in $\R^N$},
\end{aligned}
\right.
\end {equation}
where
\begin{align*}
\bg (\theta, \bm)  &= 
 - \DV \Bigl[ \left(\frac{1}{\rho_* + \theta} - \frac{1}{\rho_*} \right) \bm \otimes \bm
+ \frac{1}{\rho_*} \bm \otimes \bm \\
&\enskip - \bS \left(\left(\frac{1}{\rho_* + \theta} - \frac{1}{\rho_*}\right) \bm \right)
- \bK(\theta) + \int^1_0 P''(\rho_* + \tau \theta)(1-\tau) \,d\tau \theta^2 \Bigr],\\
\bm_0 &= (\rho_* + \rho_0) \bu_0.
\end{align*}

We now state our main theorem.

\begin{thm}\label{global} Assume that conditions \eqref{condi} and \eqref{pressure} hold
and that $3 \leq N \leq 7$.
Let $q_1$, $q_2$ and $p$ be numbers such that
\begin{equation}\label{condi:pq}
2<p<\infty,
\enskip
q_1<N<q_2, 
\enskip
2 < q_1 \leq 4,
\enskip
\frac{1}{q_1}=\frac{1}{q_2}+\frac{1}{N},
\enskip
\frac{2}{p}+\frac{N}{q_2}<1.
\end{equation}
Let $\tau$ be a number such that
\begin{equation}\label{tau}
\frac{1}{p}<\tau <\frac{N}{q_2}+\frac{1}{p}.
\end{equation}
Then, there exists a small number $\epsilon>0$ such that for any initial data 
$(\rho_0, \bm_0) 
\in \cap^2_{i=1} D_{q_i, p} (\BR^N) \cap W^{1, 0}_{q_1/2}(\BR^N)$ 
satisfying
\[
\CI :=
\sum^2_{i=1}\|(\rho_0, \bm_0)\|_{D_{q_i, p} (\BR^N)}
+\|(\rho_0, \bm_0)\|_{W^{1, 0}_{q_1/2}(\BR^N)}
+\|(\rho, \bM_0)\|_{L_{q_1/2}(\BR^N)} < \epsilon
\]
with $\bm_0 = \DV \bM_0$,
problem \eqref{nsk4} 
admits a solution $(\theta, \bm)$ with
\[
(\theta, \bm)\in X_{p, q_2, \infty}
\]
satisfying the estimate
\[
\CN(\theta, \bm)(\infty)\leq L \epsilon
\]
with some constant $L$ independent of $\epsilon$.

\end{thm}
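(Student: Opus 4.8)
The plan is to construct the solution by a contraction-mapping argument in the ball $\{(\theta,\bm) : \CN(\theta,\bm)(\infty) \le L\epsilon\}$, using the linear theory — maximal $L_p$–$L_q$ regularity together with the $L_p$–$L_q$ decay estimates of Theorem~\ref{semi} — to invert the linearized operator while treating $\bg(\theta,\bm)$ as a given forcing term. Concretely, for a fixed pair $(\theta,\bm)$ in the ball I would let $(\bar\theta,\bar\bm)$ solve the linear problem
\begin{equation*}
\pd_t \bar\theta + \dv \bar\bm = 0, \qquad
\pd_t \bar\bm - \tfrac{1}{\rho_*}\DV\bS(\bar\bm) - \kappa_*\rho_*\nabla\Delta\bar\theta = \bg(\theta,\bm),
\end{equation*}
with the prescribed initial data $(\rho_0,\bm_0)$, and define $\Phi(\theta,\bm)=(\bar\theta,\bar\bm)$. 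The goal is then to show that $\Phi$ maps the ball into itself and contracts it, so that its unique fixed point is the desired global solution.

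First I would record the linear estimate. Splitting the solution operator into low- and high-frequency parts, the high-frequency part is controlled by maximal $L_p$–$L_q$ regularity (producing the $X_{p,q_i,t}$ norms with exponential-in-time smallness), while the low-frequency part is governed by Theorem~\ref{semi}; here the hypotheses $\bm_0=\DV\bM_0$ and the divergence form of $\bg$ are essential, since under $P'(\rho_*)=0$ the pressure contribution degenerates and only this divergence structure supplies enough low-frequency decay. Combining the two pieces yields a bound of the schematic form $\CN(\Phi(\theta,\bm))(t)\lesssim \CI + \CM(\bg(\theta,\bm))(t)$, where $\CM$ denotes the space–time and weighted-sup norms of the forcing dictated by the Duhamel representation.

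Next comes the nonlinear estimate, which is the heart of the argument. Because every term of $\bg$ is at least quadratic — the momentum flux $\bm\otimes\bm$, the viscous term applied to $\bigl(\tfrac{1}{\rho_*+\theta}-\tfrac{1}{\rho_*}\bigr)\bm$, the Korteweg term $\bK(\theta)$ quadratic in $\nabla\theta$, and the pressure remainder $\theta^2$ — I expect estimates of the type $\CM(\bg(\theta,\bm))(t)\lesssim \CN(\theta,\bm)(t)^2$ (plus higher order). These follow from Hölder's inequality, product estimates in the relevant Sobolev and Besov norms, the uniform bound $\rho_*/4\le \rho_*+\theta\le 4\rho_*$ built into $X_{p,q,t}$ (so that $\tfrac{1}{\rho_*+\theta}-\tfrac{1}{\rho_*}$ is a smooth bounded multiplier), and the Sobolev embeddings available for $3\le N\le 7$. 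The decisive point is that the weighted norms entering $\CN$ and the exponents $\tau,p,q_1,q_2$ constrained by \eqref{condi:pq}–\eqref{tau} are calibrated so that, after inserting the decay rates of two factors into the Duhamel convolution and splitting the time integral over $[0,t/2]$ and $[t/2,t]$, each resulting integral converges and reproduces exactly the target weight on the left.

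Granting the two estimates $\CN(\Phi(\theta,\bm))(t)\lesssim \CI + \CN(\theta,\bm)(t)^2$ and its Lipschitz analogue $\CN(\Phi(\theta_1,\bm_1)-\Phi(\theta_2,\bm_2))(t)\lesssim (\CN_1+\CN_2)\,\CN((\theta_1,\bm_1)-(\theta_2,\bm_2))(t)$ for differences, choosing $L$ and then $\epsilon$ sufficiently small makes $\Phi$ a self-map and a contraction, and the fixed point gives a global solution in $X_{p,q_2,\infty}$ with $\CN(\theta,\bm)(\infty)\le L\epsilon$. The main obstacle I anticipate is precisely the closing of these time-weighted nonlinear estimates under the degenerate-pressure condition: one must verify that the divergence form of both the initial momentum and the nonlinearity feeds correctly into the low-frequency decay of Theorem~\ref{semi}, and that no term — in particular the slowly decaying momentum flux $\bm\otimes\bm$ — overruns the admissible weight, which is where the narrow window for $\tau$ and the dimensional restriction $N\le 7$ are used.
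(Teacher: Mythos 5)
Your proposal follows essentially the same route as the paper: the paper likewise defines $\Phi(\theta,\bm)=(\omega,\bw)$ via the linear problem \eqref{nsk5} with forcing $\bg(\theta,\bm)$, estimates the Duhamel integral by splitting into low frequencies (Theorem \ref{semi} (i), where the divergence form of $\bg$ and of $\bm_0$ is exactly what is needed under $P'(\rho_*)=0$) and high frequencies, treats short times and the time-weighted norms by maximal regularity (Theorem \ref{lmr}, applied to time-shifted equations), and closes with the quadratic bound $\CN(\omega,\bw)(t)\leq C(\CI+\CN(\theta,\bm)(t)^2)$ followed by the self-map/contraction argument. The only cosmetic difference is that the paper derives the high-frequency decay for $t>2$ from Theorem \ref{semi} (ii) rather than from exponentially weighted maximal regularity, so the architecture and all key ideas coincide.
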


\begin{remark} \thetag1~ In Theorem \ref{global}, the constant $L$ is defined from several constants appearing in the estimates for the linearized equations and the constant $\epsilon$ will be chosen in such a way that $L^2\epsilon <1$. 
\\
\thetag2~ We only consider the dimension $3 \leq N \leq 7$.
 In fact, in the case $N = 2$, 
$q_1 < 2$,  and so $q_1/2 < 1$.  In this case,
our argument does not work.
Furthermore, we need a restriction $N < 8$
by the condition $q_1 \leq 4$.
\end{remark}


\section{Analysis for the linear problem}
In this section, we consider
the maximal $L_p$-$L_q$ regularity 
and decay properties of solutions,
which are the key tools for the proof of Theorem \ref{global}.

\subsection{Maximal $L_p$-$L_q$ regularity}

In this subsection, we state the maximal $L_p$-$L_q$ regularity 
for the time local linear problem:
\begin{equation}\label{l2}
\left\{
\begin{aligned}
&\pd_t \theta + \dv \bm = 0 & \quad &\text{in $\R^N$ for $t\in (0, T)$},  \\
&\pd_t \bm - \frac{1}{\rho_*} \DV \bS(\bm) -\kappa_* \rho_* \nabla \Delta \theta
= \bg & \quad &\text{in $\R^N$ for $t \in (0, T)$}, \\
&(\theta, \bm)|_{t=0} = (\rho_0, \bm_0)& \quad &\text{in $\R^N$}.
\end{aligned}
\right.
\end {equation}

If we extend $\bg$
by zero outside of $(0, T)$, by
Theorem 2.6 in \cite{Sa} and the uniquness of solutions, we have 
the following result.

\begin{thm}\label{lmr}
Let $T, R > 0$, $1 < p, q < \infty$. 
Then, there exists a constant $\delta_0 \geq 1$
such that the following assertion holds:
For any initial data $(\rho_0, \bm_0) \in D_{q, p} (\BR^N)$
with $\|(\rho_0, \bm_0)\|_{D_{q, p}(\BR^N)} \leq R$
satisfying the range condition:
\begin{equation}\label{initial}
\rho_*/2 < \rho_* + \rho_0 (x) <
2\rho_* \quad (x~{\rm \in}~\BR^N),
\end{equation}
and right member
$\bg \in L_p((0, T), L_q(\BR^N)^N)$,
problem \eqref{l2} admits 
a unique solution $(\rho, \bm) \in X_{p, q, T}$
possessing the estimate 
\begin{equation}\label{mr1}
E_{p, q}(\rho, \bm)(t)
\leq
C_{p, q, N, \delta_0, R} e^{\delta t}
\left(\|(\rho_0, \bm_0)\|_{D_{q, p} (\BR^N)}
+\|\bg\|_{L_p((0, t), L_q(\BR^N)^N)}\right)
\end{equation}
for any $t \in (0, T]$ and $\delta \geq \delta_0$, 
where we set 
\begin{equation*}
\begin{aligned}
E_{p, q}(\rho, \bu) (t)&=\|\pd_s \rho\|_{L_p((0, t), W^1_q(\BR^N))}
+
\|\rho\|_{L_p((0, t), W^3_q(\BR^N))}\\
&+
\|\pd_s \bm\|_{L_p((0, t), L_q(\BR^N)^N)}
+
\|\bm\|_{L_p((0, t), W^2_q(\BR^N)^N)},
\end{aligned}
\end{equation*}
and constant $C_{p, q, N, \delta_0, R}$ is independent of $\delta$ and $t$.

\end{thm}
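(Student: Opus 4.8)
The plan is to reduce the evolution problem \eqref{l2} to the associated generalized resolvent problem and then to combine the theory of $\CR$-bounded solution operators with the operator-valued Fourier multiplier theorem of Weis. Writing $\CA(\theta,\bm) = (-\dv\bm,\ \rho_*^{-1}\DV\bS(\bm) + \kappa_*\rho_*\nabla\Delta\theta)$, the system \eqref{l2} reads $\pd_t(\theta,\bm) - \CA(\theta,\bm) = (0,\bg)$ with $(\theta,\bm)|_{t=0}=(\rho_0,\bm_0)$, and the corresponding resolvent problem $(\lambda-\CA)(\theta,\bm)=(f,\bg)$ is
\begin{equation*}
\lambda\theta + \dv\bm = f, \qquad
\lambda\bm - \frac{1}{\rho_*}\DV\bS(\bm) - \kappa_*\rho_*\nabla\Delta\theta = \bg
\end{equation*}
(with $f=0$ in the application), to be solved for $\lambda$ in a sector $\Sigma_{\ve,\lambda_0}=\{\lambda\in\BC : |\arg\lambda|\le\pi-\ve,\ |\lambda|\ge\lambda_0\}$.

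First I would solve this resolvent problem explicitly by Fourier transform. Using $\DV\bS(\bm)=\mu_*\Delta\bm+\nu_*\nabla\dv\bm$, the transformed equations form a linear algebraic system for $(\hat\theta,\hat\bm)$ whose determinant is a polynomial in $\lambda$ and $|\xi|$; inverting it produces the solution operators $\CS(\lambda)$ sending $(f,\bg)$ to $(\theta,\bm)$. The decisive step is to show that the operator families representing the top-order quantities (schematically, $\{(\lambda\pd_\lambda)^k T(\lambda):\lambda\in\Sigma_{\ve,\lambda_0}\}$ for $k=0,1$, with $T(\lambda)$ running through the multipliers for $\lambda\theta,\ \nabla^j\theta\ (j\le 3)$ and $\lambda\bm,\ \nabla^j\bm\ (j\le 2)$) are $\CR$-bounded on $L_q(\BR^N)$. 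Since each such multiplier is a scalar rational function of $(\lambda,\xi)$, this reduces to Mikhlin--H\"ormander pointwise bounds of the form $|\pd_\xi^\alpha T(\lambda,\xi)|\le C|\xi|^{-|\alpha|}$ uniformly in $\lambda\in\Sigma_{\ve,\lambda_0}$, after which $\CR$-boundedness of the associated multiplier operators follows from the standard $\CR$-bounded multiplier theorem. This is precisely the content of Theorem~2.6 in \cite{Sa}, so I would quote it rather than rederive the symbol estimates; this $\CR$-boundedness is also the main obstacle of the whole argument, the delicate point being to control the third-order density derivatives $\nabla^3\theta$ against the capillary term $\kappa_*\rho_*\nabla\Delta\theta$ uniformly on the sector.

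Given the $\CR$-bounded family $\CS(\lambda)$ on the shifted sector, the second step transfers it into maximal $L_p$-$L_q$ regularity for the time-dependent problem. Applying the Weis operator-valued Fourier multiplier theorem to $\CS(\delta+is)$, $s\in\BR$, for $\delta\ge\delta_0:=\lambda_0$ yields, for data $(\rho_0,\bm_0)\in D_{q,p}(\BR^N)$ and right member $\bg\in L_p((0,\infty),L_q(\BR^N)^N)$, a solution on the half-line together with the bound for $E_{p,q}$. The weight $e^{\delta t}$ in \eqref{mr1} arises from undoing the substitution $(\theta,\bm)\mapsto e^{-\delta t}(\theta,\bm)$ used to push the spectral parameter into $\Sigma_{\ve,\lambda_0}$, and the independence of $C_{p,q,N,\delta_0,R}$ from $\delta$ and $t$ reflects the uniformity of the multiplier bounds in $\delta\ge\delta_0$. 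To pass to the finite interval $(0,T)$, I would extend $\bg$ by zero to $(0,\infty)$, solve there, and restrict; the range condition \eqref{initial} on $\rho_0$ is the one compatible with membership in $X_{p,q,T}$, and the bound $\|(\rho_0,\bm_0)\|_{D_{q,p}(\BR^N)}\le R$ fixes the $R$-dependence of the constant.

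It remains to address uniqueness. The difference of two solutions in $X_{p,q,T}$ with the same data solves the homogeneous problem with zero initial value and zero forcing, and the a priori estimate underlying \eqref{mr1}, which is valid for every solution in $X_{p,q,T}$ (being derived from the equation through the same multiplier bounds), then forces the difference to vanish; equivalently, after a Laplace transform in $t$ the transform solves $(\lambda-\CA)(\theta,\bm)=0$ on the sector, where $\lambda-\CA$ is injective by the resolvent estimate. Together with the construction above, this yields existence and uniqueness in $X_{p,q,T}$ with the stated estimate.
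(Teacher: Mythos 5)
Your proposal is correct and follows essentially the same route as the paper, which disposes of Theorem \ref{lmr} in one line: extend $\bg$ by zero outside $(0,T)$, invoke Theorem 2.6 of \cite{Sa}, and appeal to uniqueness of solutions. The only difference is packaging---the paper quotes Saito's theorem as a black box for the maximal $L_p$-$L_q$ regularity itself, whereas you cite it for the $\CR$-bounded solution operators of the resolvent problem and then rerun the standard Weis-multiplier and exponential-shift argument that underlies it.
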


\begin{remark}
Using Theorem \ref{lmr} and employing the same argument 
as in the proof of Theorem 3.1 in \cite{MS}, we also have the local well-posedness for \eqref{nsk}.
\end{remark}

\subsection{Decay property of solutions}
In this subsection, we consider the following linearized problem:
\begin{equation}\label{l1}
\left\{
\begin{aligned}
&\pd_t \theta + \dv \bm = 0 & \quad &\text{in $\R^N$ for $t \in (0, T)$},  \\
&\pd_t \bm - \alpha_* \Delta \bm - \beta_* \nabla \dv \bm -\kappa_* \rho_* \nabla \Delta \theta
= 0 & \quad &\text{in $\R^N$ for $t \in (0, T)$}, \\
&(\theta, \bm)|_{t=0} = (f, \bg)& \quad &\text{in $\R^N$},
\end{aligned}
\right.
\end {equation}
where $\alpha_* = \mu_*/\rho_*$ and $\beta_* = \nu_*/\rho_*$.
Then, by taking Fourier transform of \eqref{l1} and solving the ordinary differential equation with respect to $t$, 
$S_1(t)(f, \bg) := \theta$ and $S_2(t)(f, \bg) := \bm$ satisfy the following formula:
\begin{enumerate}
\item
If
$\delta_* := (\alpha_* + \beta_*)^2/4 - \rho_* \kappa_* \neq 0$,
we have

\begin{equation}\label{s}
\begin{aligned}
\theta
&= - \CF^{-1}_\xi
\left[ \frac{\lambda_- e^{\lambda_+ t} - \lambda_+ e^{\lambda_- t}}
{\lambda_+ - \lambda_-} \hat f \right]
- \sum^N_{k = 1} \CF^{-1}_\xi \left[ \rho_* 
\frac{e^{\lambda_+ t} - e^{\lambda_- t}}
{\lambda_+ - \lambda_-} i \xi_k \hat g_k \right],\\
\bm
&= \CF^{-1}_\xi [e^{-\alpha_* |\xi|^2 t}\hat \bg]
- \sum^N_{k = 1} 
\CF^{-1}_\xi \left[e^{-\alpha_* |\xi|^2 t} \frac{\xi \xi_k}
{|\xi|^2} \hat g_k\right]
- \CF^{-1}_\xi \left[ \kappa_* |\xi|^2
\frac{e^{\lambda_+ t} - e^{\lambda_- t}}
{\lambda_+ - \lambda_-} i\xi \hat f \right]\\
&- \sum^N_{k = 1} 
\CF^{-1}_\xi
\left[ \frac{\{(\alpha_* + \beta_*) |\xi|^2 + \lambda_-\} 
e^{\lambda_+ t} 
- \{(\alpha_* + \beta_*) |\xi|^2 + \lambda_+\}
e^{\lambda_- t}}{|\xi|^2 (\lambda_+ - \lambda_-)} 
\xi \xi_k \hat g_k \right],
\end{aligned}
\end{equation}
where 
\begin{equation}\label{lambda}
\lambda_\pm=
\left\{
\begin{aligned}
&- \displaystyle \frac{\alpha_* + \beta_*}{2}
|\xi|^2 \pm \sqrt{\delta_*}|\xi|^2 
& \delta_* > 0,\\
&- \displaystyle \frac{\alpha_* + \beta_*}{2}
|\xi|^2 \pm i \sqrt{|\delta_*|}|\xi|^2
& \delta_* < 0.
\end{aligned}
\right.
\end{equation}
\item
If $\delta_* = 0$, we have
\begin{equation}\label{s0}
\begin{aligned}
\theta
&= \CF^{-1}_\xi
\left[ e^{\lambda_0 t} (1 - \lambda t) \hat f \right]
- \sum^N_{k = 1} \CF^{-1}_\xi \left[ te^{\lambda_0 t} i\xi_k \hat g_k \right],\\
\bm
&= \CF^{-1}_\xi [e^{-\alpha_* |\xi|^2 t}\hat \bg]
- \sum^N_{k = 1} 
\CF^{-1}_\xi \left[e^{-\alpha_* |\xi|^2 t} \frac{\xi \xi_k}
{|\xi|^2} \hat g_k\right]
- \CF^{-1}_\xi \left[ e^{\lambda_0 t} \frac{t \lambda^2}{|\xi|^2}
i\xi \hat f \right]\\
& \enskip + \sum^N_{k = 1} 
\CF^{-1}_\xi
\left[ e^{\lambda_0 t} \frac{1 + t \lambda}{|\xi|^2} 
\xi \xi_k \hat g_k \right],
\end{aligned}
\end{equation}
where 
\begin{equation*}
\lambda_0=
-\frac{\alpha_* + \beta_*}{2}
|\xi|^2.
\end{equation*}
\end{enumerate}

To state decay estimates of $\theta$ and $\bm$,
we  divide the solution formula
into the low frequency part and high frequency part. For this purpose, 
we introduce a cut off function $\varphi(\xi) \in C^\infty(\BR^N)$
 which equals $1$ for $|\xi| \leq \epsilon$ and $0$ for $|\xi| \geq 2\epsilon$,
 where $\epsilon$ is a suitably small positive constant. Let $\Phi_0$ and $\Phi_\infty$ be 
 operators acting on $(f, \bg) \in W^{1,0}_q(\BR^N)$ defined by setting
 $$\Phi_0(f, \bg) = \CF^{-1}_\xi[\varphi(\xi)(\hat f(\xi), \hat\bg(\xi))],
 \quad \Phi_\infty(f, \bg) = \CF^{-1}_\xi[(1-\varphi(\xi))(\hat f(\xi), \hat\bg(\xi))].
 $$

\begin{thm}\label{semi}
Let $S_i(t)$ $(i=1,2)$ be the solution operators of \eqref{l1}
given \eqref{s} and let 
$S^0(t)(f, \bg) = (S^0_1(t) (f, \bg), S^0_2(t) (f, \bg))$ and 
$S^\infty (t)(f, \bg) = (S^\infty_1(t) (f, \bg), S^\infty_2(t) (f, \bg))$ with
$S^0_i(t)(f, \bg) = S_i(t)\Phi_0(f, \bg)$ and $S^\infty_i(t)(f, \bg) = S_i(t)\Phi_\infty(f, \bg)$.
Then, $S^0(t)$ and $S^\infty(t)$ have the following decay properties
\begin{enumerate}
\item
\begin{equation}\label{esemi1}
\|\pd^j_x S^0(t) (f, \bg)\|_{L_p(\R^N)}
\leq C
t^{-\frac{N}{2} (\frac{1}{q} - \frac{1}{p}) - \frac{j}{2}}
\|(f, \bG)\|_{L_q(\R^N)}
\end{equation}
with $\bg = \DV \bG$, $j \in \BN_0$ and 
some constant $C$ depending on 
$j$, $p$, $q$, $\alpha_*$ and $\beta_*$,
where
\begin{equation}\label{pqcondi1}
\left\{
\begin{aligned}
&1 < q \leq p \leq \infty \text{ and } 
(p, q) \neq (\infty, \infty) 
&\text{ if } 0 < t \leq 1,\\
& 1< q \leq 2 \leq p \leq \infty \text{ and } 
(p, q) \neq (\infty, \infty) 
&\text{ if } t \geq 1.
\end{aligned}
\right.
\end{equation}

\item
\begin{equation}\label{esemi2}
\|\pd^j_x S^\infty(t) (f, \bg)\|_{W^{1, 0}_p(\R^N)}
\leq C
t^{-\frac{N}{2} (\frac{1}{q} - \frac{1}{p}) - \frac{j}{2}}
\|(f, \bg)\|_{W^{1, 0}_q(\R^N)}
\end{equation}
with $j \in \BN_0$ and 
some constant $C$ depending on 
$j$, $p$, $q$, $\alpha_*$ and $\beta_*$,
where
\begin{equation}\label{pqcondi2}
1 < q \leq p \leq \infty \text{ and } 
(p, q) \neq (\infty, \infty) .
\end{equation}

\end{enumerate}
\end{thm}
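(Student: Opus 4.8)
The plan is to work entirely on the Fourier side, starting from the explicit representations \eqref{s} and \eqref{s0}, and to reduce every term of $S^0(t)$ and $S^\infty(t)$ to a Fourier multiplier of heat-kernel type, to which a standard $L_q$-$L_p$ multiplier estimate applies. The structural fact that makes this work, and which reflects the critical assumption $P'(\rho_*)=0$, is that the characteristic roots in \eqref{lambda} are \emph{exactly} homogeneous of degree two: $\lambda_\pm = c_\pm|\xi|^2$ with constants $c_\pm$ depending only on $\alpha_*,\beta_*,\kappa_*,\rho_*$. Indeed $\lambda_++\lambda_-=-(\alpha_*+\beta_*)|\xi|^2$, $\lambda_+\lambda_-=\kappa_*\rho_*|\xi|^4$, and $\lambda_+-\lambda_-=2\sqrt{\delta_*}\,|\xi|^2$, and the worst root obeys $\mathrm{Re}\,c_+<0$ precisely because $\kappa_*\rho_*>0$. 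Hence there is $c_0>0$ with $\mathrm{Re}\,\lambda_\pm\le -c_0|\xi|^2$, and likewise the transverse factor $e^{-\alpha_*|\xi|^2t}$ decays; the whole system is parabolic with no acoustic part, which is exactly why the decay exponent is the heat exponent $-\frac N2(\frac1q-\frac1p)-\frac j2$.

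For assertion (i) I would first substitute the divergence structure $\hat g_k=i\xi_l\hat G_{kl}$ into \eqref{s}. The point is that the apparently singular factors $1/(\lambda_+-\lambda_-)\sim|\xi|^{-2}$ and $1/|\xi|^2$ in \eqref{s} are compensated exactly. Using $\bigl|\tfrac{e^{\lambda_+t}-e^{\lambda_-t}}{\lambda_+-\lambda_-}\bigr|\le t\,e^{-c_0|\xi|^2t}$ together with the identity $(\alpha_*+\beta_*)|\xi|^2+\lambda_\mp=-\lambda_\pm$ (which turns the singular $\bm$-symbol $N(\xi,t)/(|\xi|^2(\lambda_+-\lambda_-))$ into a bounded one), every multiplier acting on $(\hat f,\hat\bG)$ in both the $\theta$ and $\bm$ components, after applying $\partial_x^j$, reduces on $\mathrm{supp}\,\varphi$ to $a(\xi)\,|\xi|^{s+j}e^{-c|\xi|^2t}$ with $a$ smooth and bounded and $s\ge 0$. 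The extra factor $|\xi|$ carried by $\hat g_k=i\xi_l\hat G_{kl}$ is exactly what is needed: the $\hat g$-contribution to $\theta$ has raw symbol $\sim|\xi|\,t\,e^{-c|\xi|^2t}$, whose $\xi$-supremum grows like $t^{1/2}$, whereas the divergence structure upgrades it to $\sim|\xi|^2t\,e^{-c|\xi|^2t}$, which is uniformly bounded. The degenerate case $\delta_*=0$ in \eqref{s0} is treated identically, the factors $t\,e^{\lambda_0t}$ obeying the same bound $|\xi|^2t\,e^{-c|\xi|^2t}\le C$. I would then invoke the heat-type multiplier lemma: for $0<t\le1$ the low-frequency cut-off makes the kernel Schwartz-like with operator norm bounded uniformly, so Young's inequality yields the full range $1<q\le p\le\infty$; for $t\ge1$, where $e^{\lambda_\pm t}$ may oscillate ($\delta_*<0$), I would estimate $\|\CF^{-1}[m\hat u]\|_{L_p}$ through a Hausdorff--Young/Plancherel argument (using $\|g\|_{L_p}\le C\|\hat g\|_{L_{p'}}$ for $p\ge2$ and $\|\hat u\|_{L_{q'}}\le C\|u\|_{L_q}$ for $q\le2$), which depends only on $|m|$ and hence tolerates the oscillation, but forces $q\le2\le p$, precisely the dichotomy \eqref{pqcondi1}.

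For assertion (ii) no divergence structure is needed, since on $\mathrm{supp}\,(1-\varphi)$ we have $|\xi|\ge\epsilon$ and all denominators in \eqref{s} are nonsingular, so the symbols are smooth. Here $\mathrm{Re}\,\lambda_\pm\le-c_0|\xi|^2\le-c_0\epsilon^2$, and the argument splits at $t=1$. For $0<t\le1$ the parabolic smoothing $|\xi|^a e^{-c|\xi|^2t}\le C\,t^{-a/2}$ supplies both the gain of one derivative (the $W^{1,0}_p$ on the left against $W^{1,0}_q$ on the right) and the factor $t^{-\frac N2(\frac1q-\frac1p)-\frac j2}$, yielding the $L_q$-$L_p$ bound by a Mikhlin-type estimate on the remaining smooth symbol. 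For $t\ge1$ the exponential factor $e^{-c_0\epsilon^2t/2}$ dominates any algebraic rate, so it suffices to combine it with the $L_q$-$L_q$ boundedness of the smooth symbols; this gives \eqref{esemi2} over the full range \eqref{pqcondi2}.

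I expect the main obstacle to be the low-frequency bookkeeping in assertion (i): verifying, uniformly across the three regimes $\delta_*>0$, $\delta_*<0$, $\delta_*=0$, that after inserting $\bg=\DV\bG$ every symbol is genuinely of the form $a(\xi)|\xi|^{s+j}e^{-c|\xi|^2t}$ with $s\ge0$ and no residual singularity at $\xi=0$, and then establishing the large-time $L_q$-$L_p$ bound in the oscillatory case, which is the true source of the restriction $q\le2\le p$ in \eqref{pqcondi1}. The high-frequency part, being exponentially damped and free of singular denominators, should be comparatively routine.
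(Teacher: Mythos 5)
Your proposal is correct and takes essentially the same approach as the paper: you isolate exactly the same difficulty---that $\lambda_+-\lambda_-=C_*|\xi|^2$ makes the $\hat g$-contribution to $\theta$ singular at low frequencies, which the divergence structure $\bg=\DV\bG$ (together with the identity $(\alpha_*+\beta_*)|\xi|^2+\lambda_\mp=-\lambda_\pm$ and the bound $t|\xi|^2e^{-c|\xi|^2t}\le C$, also covering $\delta_*=0$) compensates---and then reduce all terms to heat-type multipliers estimated by Young's inequality for $0<t\le 1$, a Plancherel/Hausdorff--Young argument forcing $q\le 2\le p$ for $t\ge 1$, and Mikhlin-type bounds with exponential damping at high frequencies. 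This is precisely the calculation the paper imports wholesale from Theorem 4.1 of \cite{MS}; the only difference is that you spell out those standard multiplier estimates rather than citing them.
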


\begin{proof}
First, we consider the case where $\delta_* \neq 0$.
The difference between the cases where $P'(\rho_*) = 0$ and $P'(\rho_*) > 0$ is that 
$\lambda_\pm$ satisfies \eqref{lambda} not only for the high frequency part, 
but also for the low frequency part.
By this difference, the second term of $S_1(t)$ given \eqref{s} has a trouble
because of $\lambda_+ - \lambda_- = C_* |\xi|^2$, where
$C_* = 2 \sqrt{\delta_*}$ if $\delta_* > 0$ and 
$C_* = 2 i \sqrt{|\delta_*|}$ if $\delta_* < 0$.
Due to the condition $\bg = \DV\bG$,
the second term of $S_1(t)$ satisfies
\[
- \sum^N_{k = 1} \CF^{-1}_\xi \left[ \rho_* 
\frac{e^{\lambda_+ t} - e^{\lambda_- t}}
{\lambda_+ - \lambda_-} i \xi_k \hat g_k \right]
= \sum^N_{j, k = 1} \CF^{-1}_\xi \left[ \rho_* 
\frac{e^{\lambda_+ t} - e^{\lambda_- t}}
{C_* |\xi|^2} \xi_j  \xi_k \hat G_{jk} \right]
,\]
so that we can employ the same calculation as in the proof of Theorem 4.1 in \cite{MS}. 

Next, we consider the case where $\delta_* = 0$.
Using the condition $\bg = \DV\bG$ and the estimate $(|\xi| t^{1/2})^j e^{-C_0 |\xi|^2 t} \leq C e^{-(C_0/2)|\xi|^2 t}$
for $j \in \BN_0$ with some constant $C_0$ depending on $\alpha_*$ and $\beta_*$, 
the solution formula \eqref{s0} can be estimated in the same manner as in the proof of Theorem 4.1 in \cite{MS}.  
This completes the proof of Theorem \ref{semi}.
\end{proof}

\section{A proof of Theorem \ref{global}}
We prove Theorem \ref{global} by the Banach fixed point argument.
Let $p$, $q_1$ and $q_2$ be exponents given in Theorem \ref{global}.
Let $\epsilon$ be a small positive number 
and let $\CN (\theta, \bm)$ be the norm defined in \eqref{N}.
We define the underlying space $\CI_\epsilon$ by setting
\begin{equation}\label{space}
\CI_\epsilon
= \{ (\theta, \bm) \in X_{p, \frac{q_1}{2}, \infty} \cap X_{p, q_2, \infty}
\mid (\theta, \bm)|_{t=0} = (\rho_0, \bm_0), 
\enskip \CN(\theta, \bm) (\infty) \leq L \epsilon
\}
\end{equation}
with some constant $L$ which will be determined later. 
Given $(\theta, \bm) \in \CI_\epsilon$, 
let $(\omega, \bw)$ be a solution to the equation:
\begin{equation}\label{nsk5}
\left\{
\begin{aligned}
&\pd_t \omega + \rho_* \dv \bw = 0 & \quad &\text{in $\R^N$ for $t \in (0, T)$}, \\
&\pd_t \bw - \frac{1}{\rho_*} \DV \bS(\bw) -\kappa_* \rho_* \nabla \Delta \omega 
= \bg (\theta, \bm)  & \quad & \text{in $\R^N$ for $t \in (0, T)$}, \\
&(\omega, \bw)|_{t=0} = (\rho_0, \bm_0)& \quad & \text{in $\R^N$}.
\end{aligned}
\right.
\end {equation}
We shall prove the following inequality by several steps.
\begin{equation}\label{extend}
\CN(\omega, \bw) (t) \leq C(\CI + \CN(\theta, \bm) (t)^2),
\end{equation}
where
$\CI$ is defined in Theorem \ref{global}.
Throughout the following steps, 
we use the estimate
\begin{equation}\label{infty}
\frac{\rho_*}{4} \leq
\rho_* + \theta(t, x) \leq 4 \rho_*,
\end{equation} 
which is obtained by 
$(\theta, \bm) \in X_{p, \frac{q_1}{2}, \infty} \cap X_{p, q_2, \infty}$.

\subsection{Estimates of $(\nabla^j \omega, \nabla^j \bw)$ for $j = 0, 1$}
\subsubsection{In the case that $t>2$}\label{t>2}
In order to estimate $(\omega, \bw)$ in the case that $t>2$, 
we  write $(\omega, \bw)$ by Duhamel's principle as follows:
\begin{equation}\label{duhamel}
(\omega, \bw) = S(t) (\rho_0, \bm_0) + \int^t_0 S(t - s) (0, \bg(s))\, ds.
\end{equation}
Since $S(t) (\rho_0, \bm_0)$ can be estimated directly by Theorem \ref{semi},
we only estimate the second term
for the low frequencies and the high frequencies, below.
We divide the second term into three parts as follows.
\begin{align}
\int^t_0 \|\pd_x ^j S^d (t - s) (0, \bg(s))\|_{L_X}\, ds 
= \left( \int^{t/2}_0 + \int^{t-1}_{t/2} + \int^t_{t-1}\right) \|\pd_x^j S^d(t - s)(f(s), \bg(s))\|_{L_X}\, ds
=: \sum^3_{ k= 1}I_X^{k, d}
\end{align}
for $t > 2$, where $d = 0$, $\infty$ and $X = \infty$, $q_1$, $q_2$.

\noindent
\underline{\bf  Estimates for the low frequency part in $L_\infty$}

By \eqref{infty} and Theorem \ref{semi} (i) with 
$(p, q) = (\infty, q_1/2)$
and H\"older's inequality
under the condition $q_1/2 \leq 2$, 
we have
\begin{align}\label{d1}
I_\infty^{1, 0} &\leq C \int^{t/2}_0 (t - s)^{-\frac{N}{q_1} - \frac{j}{2}} 
\|\bG\|_{L_{q_1/2}(\BR^N)} \,ds
\leq C\int^{t/2}_0 (t - s)^{-\frac{N}{q_1} - \frac{j}{2}} (A_1 + B_1) \,ds, 
\end{align}
where
\begin{align*}
A_1 &=\|(\theta, \bm)\|_{L_{q_1}(\R^N)}^2
+\|(\theta, \bm)\|_{L_{q_1}(\R^N)}
\|(\nabla \theta, \nabla \bm) \|_{L_{q_1}(\R^N)}
+\|\nabla \theta\|_{L_{q_1}(\R^N)}^2, \\
B_1&=\|\theta\|_{L_{q_1}(\R^N)}
\|\nabla^2 \theta\|_{L_{q_1}(\R^N)}.
\end{align*}
satisfying
\begin{align}
A_1 &\leq 
<s>^{-\frac{N}{q_1}}
[(\theta, \bm)]_{q_1, \frac{N}{2q_1}, t}^2
+ <s>^{-(\frac{N}{q_1}+\frac{1}{2})}
[(\theta, \bm)]_{q_1, \frac{N}{2q_1}, t}
[(\nabla \theta, \nabla \bm)]_{q_1, \frac{N}{2q_1}+\frac{1}{2}, t}
\nonumber \\
&\enskip + <s>^{-(\frac{N}{q_1}+1)}
[\nabla \theta]_{q_1, \frac{N}{2q_1}+\frac{1}{2}, t}^2
 \nonumber \\
& \leq
<s>^{-\frac{N}{q_1}}
([(\theta, \bm)]_{q_1, \frac{N}{2q_1}, t}^2
+[(\theta, \bm)]_{q_1, \frac{N}{2q_1}, t}
[(\nabla \theta, \nabla \bm)]_{q_1, \frac{N}{2q_1}+\frac{1}{2}, t}
+[\nabla \theta]_{q_1, \frac{N}{2q_1}+\frac{1}{2}, t}^2).\label{A1}\\
B_1 &\leq
<s>^{-(\frac{N}{q_1}-\tau)}[\theta]_{q_1, \frac{N}{2q_1}, t}
<s>^{\frac{N}{2q_1}-\tau}
\|\theta\|_{W^2_{q_1}(\R^N)}.\label{B1}
\end{align}
Since $1- N/q_1 < 0$ and $1 - (N/q_1 - \tau)p' < 0$
as follows from $q_1 < N$
and $\tau < N/q_2 +1/p$,
by \eqref{d1}, \eqref{A1} and \eqref{B1}, we have
\begin{align}\label{infty1}
I_\infty^{1, 0}
 &\leq C t^{-\frac{N}{q_1} - \frac{j}{2}} 
 \int^{t/2}_0 <s>^{-\frac{N}{q_1}}\,ds 
 ([(\theta, \bm)]_{q_1, \frac{N}{2q_1}, t}^2
+[(\theta, \bm)]_{q_1, \frac{N}{2q_1}, t}
[(\nabla \theta, \nabla\bm)]_{q_1, \frac{N}{2q_1}+\frac{1}{2}, t}
+[\nabla \theta]_{q_1, \frac{N}{2q_1}+\frac{1}{2}, t}^2)
 \nonumber\\ 
& \enskip
+ C t^{-\frac{N}{q_1} - \frac{j}{2}} \left(\int^{t/2}_0 
<s>^{-(\frac{N}{q_1} -\tau)p'}\,ds\right)^{1/p'} 
[\theta]_{q_1, \frac{N}{2q_1}, t}
\|<s>^{\frac{N}{2q_1}-\tau} \theta\|_{L_p((0, t), W^2_{q_1}(\R^N))}
\nonumber\\
&\leq C t^{-\frac{N}{q_1} - \frac{j}{2}} E_0^0 (t),
\end{align}
where 
\begin{align*}
E_0^0(t) &= [(\theta, \bm)]_{q_1, \frac{N}{2q_1}, t}^2
+[(\theta, \bm)]_{q_1, \frac{N}{2q_1}, t}
[(\nabla \theta, \nabla \bm)]_{q_1, \frac{N}{2q_1}+\frac{1}{2}, t}
+[\nabla \theta]_{q_1, \frac{N}{2q_1}+\frac{1}{2}, t}^2\\
& \enskip 
+[\theta]_{q_1, \frac{N}{2q_1}, t}
\|<s>^{\frac{N}{2q_1}-\tau} \theta\|_{L_p((0, t), W^2_{q_1}(\R^N))}.
\end{align*}
Analogously, we have
\begin{equation}\label{infty2}
I_\infty^{2, 0} \leq C t^{-\frac{N}{q_1} - \frac{j}{2}} E_0^0(t).
\end{equation}

We now estimate $I^{3, 0}_\infty$. 
By \eqref{infty} and Theorem \ref{semi} (i) with 
$(p, q) = (\infty, q_2)$,
we have
\begin{align}\label{d2}
I_\infty^{3, 0} &\leq C \int^t_{t-1} (t - s)^{-\frac{N}{2q_2} - \frac{j}{2}} 
\|\bG\|_{L_{q_2}(\BR^N)} \,ds
\leq C\int^t_{t-1} (t - s)^{-\frac{N}{2q_2} - \frac{j}{2}} (A_2 + B_2) \,ds, 
\end{align}
where
\begin{align*}
A_2 &=\|(\theta, \bm)\|_{L_ \infty (\R^N)}
(\|(\theta, \bm)\|_{L_{q_2} (\R^N)} 
+ \|(\nabla \theta, \nabla \bm)\|_{L_{q_2} (\R^N)})
+\|\nabla \theta\|_{L_\infty (\R^N)} \|\nabla \theta\|_{L_{q_2}(\R^N)}, \\
B_2&=\|\theta\|_{L_\infty (\R^N)}
\|\nabla^2 \theta\|_{L_{q_2}(\R^N)}
\end{align*}
satisfying
\begin{align}
A_2 &\leq <s>^{-(\frac{N}{q_1}+\frac{N}{2q_2}+1)} 
[(\theta, \bm)]_{\infty, \frac{N}{q_1}, t}
[(\theta, \bm)]_{q_2, \frac{N}{2q_2}+1, t}\nonumber \\
&\enskip +<s>^{-(\frac{N}{q_1}+\frac{N}{2q_2}+\frac{3}{2})}
[(\theta, \bm)]_{\infty, \frac{N}{q_1}, t}
[(\nabla \theta, \nabla \bm)]_{q_2, \frac{N}{2q_2}+\frac{3}{2}, t}\nonumber \\
& \enskip + <s>^{-(\frac{N}{q_1}+\frac{N}{2q_2}+2)}
[\nabla \theta]_{\infty, \frac{N}{q_1}+\frac{1}{2}, t}
[(\nabla \theta, \nabla \bm)]_{q_2, \frac{N}{2q_2}+\frac{3}{2}, t},\label{A2} \\
B_2 &\leq
<s>^{-(\frac{N}{q_1}+\frac{N}{2q_2}+1-\tau)}
[\theta]_{\infty, \frac{N}{q_1}, t}
<s>^{\frac{N}{2q_2}+1-\tau}
\|\theta\|_{W^2_{q_2}(\R^N)}.\label{B2}
\end{align}
Since $1-(N/2q_2 + j/2) > 0$, $1 - (N/2q_2 + j/2)p' > 0$,
and $N/2q_2 + 1 -\tau > j/2$ as follows 
from $N < q_2$, $2/p + N/q_2<1$ and
$\tau < N/q_2 + 1/p$,
by \eqref{d2}, \eqref{A2} and \eqref{B2}, we have
\begin{align}\label{infty3}
I_\infty^{3, 0}
&\leq 
C t^{-(\frac{N}{q_1}+\frac{N}{2q_2}+1)} 
\int^t_{t-1} (t - s)^{- ( \frac{N}{2q_2} + \frac{j}{2} )}\,ds 
[(\theta, \bm)]_{\infty, \frac{N}{q_1}, t}
[(\theta, \bm)]_{q_2, \frac{N}{2q_2}+1, t} \nonumber \\
& \enskip 
+ 
C t^{-(\frac{N}{q_1}+\frac{N}{2q_2}+\frac{3}{2})} 
\int^t_{t-1} (t - s)^{- ( \frac{N}{2q_2} + \frac{j}{2} )}\,ds 
[(\theta, \bm)]_{\infty, \frac{N}{q_1}, t}
[(\nabla \theta, \nabla \bm)]_{q_2, \frac{N}{2q_2}+\frac{3}{2}, t}\nonumber \\
& \enskip
+
C t^{-(\frac{N}{q_1}+\frac{N}{2q_2}+2)} 
\int^t_{t-1} (t - s)^{- ( \frac{N}{2q_2} + \frac{j}{2} )}\,ds 
[\nabla \theta]_{\infty, \frac{N}{q_1}+\frac{1}{2}, t}
[(\nabla \theta, \nabla \bm)]_{q_2, \frac{N}{2q_2}+\frac{3}{2}, t}\nonumber \\
& \enskip
+
Ct^{-(\frac{N}{q_1}+\frac{N}{2q_2}+1-\tau)}
\left(\int^t_{t-1} (t - s)^{- ( \frac{N}{2q_2} + \frac{j}{2} )p'}\,ds\right)^{1/p'} 
[\theta]_{\infty, \frac{N}{q_1}, t}
\|<s>^{\frac{N}{2q_2}+1-\tau}
\theta\|_{L_p((0, t), W^2_{q_2}(\R^N))}\nonumber\\
&\leq C t^{-\frac{N}{q_1} - \frac{j}{2}} E_2^0(t),
\end{align}
where 
\begin{align*}
E_2^0(t) &= [(\theta, \bm)]_{\infty, \frac{N}{q_1}, t}
\{[(\theta, \bm)]_{q_2, \frac{N}{2q_2}+1, t}
+[(\nabla \theta, \nabla \bm)]_{q_2, \frac{N}{2q_2}+\frac{3}{2}, t}\}\\
&\enskip 
+[\nabla \theta]_{\infty, \frac{N}{q_1}+\frac{1}{2}, t}
[(\nabla \theta, \nabla \bm)]_{q_2, \frac{N}{2q_2}+\frac{3}{2}, t}
+[\theta]_{\infty, \frac{N}{q_1}, t}
\|<s>^{\frac{N}{2q_2}+1-\tau} \theta\|_{L_p((0, t), W^2_{q_2}(\R^N))}.
\end{align*}

By \eqref{infty1}, \eqref{infty2} and \eqref{infty3}, we have
\begin{equation}\label{infty4}
\int^t_0 \|\pd_x ^j S^0 (t - s) (0, \bg(s))\|_{L_\infty}\, ds 
\leq C t^{-\frac{N}{q_1} - \frac{j}{2}} (E_0^0 (t) + E_2^0 (t)).
\end{equation}

\noindent
\underline{\bf Estimates for the low frequency part in $L_{q_1}$}

Using \eqref{infty} and Theorem \ref{semi} (i) with $(p, q) = (q_1, q_1/2)$ and employing 
the same calculation as in the estimate in $L_\infty$, we have
\begin{equation}\label{q11}
I_{q_1}^{1, 0} + I_{q_1}^{2, 0} \leq C t^{-\frac{N}{2q_1} - \frac{j}{2}} E_0^0(t).
\end{equation}
By Theorem \ref{semi} (i) with 
$(p, q) = (q_1, q_1)$,
we have
\begin{align}\label{d3}
I_{q_1}^{3, 0} &\leq C \int^t_{t-1} (t - s)^{- \frac{j}{2}} 
\|\bG\|_{L_{q_1}(\BR^N)} \,ds
\leq C\int^t_{t-1} (t - s)^{- \frac{j}{2}} (A_3 + B_3) \,ds, 
\end{align}
where
\begin{align*}
A_3 &=\|(\theta, \bm)\|_{L_ \infty (\R^N)}
(\|(\theta, \bm)\|_{L_{q_1} (\R^N)} 
+ \|(\nabla \theta, \nabla \bm)\|_{L_{q_1} (\R^N)})
+\|\nabla \theta\|_{L_\infty (\R^N)} \|\nabla \theta\|_{L_{q_1}(\R^N)}, \\
B_3&=\|\theta\|_{L_\infty (\R^N)}
\|\nabla^2 \theta\|_{L_{q_1}(\R^N)}
\end{align*}
satisfying
\begin{align}
A_3 &\leq <s>^{-\frac{3N}{2q_1}} 
[(\theta, \bm)]_{\infty, \frac{N}{q_1}, t}
[(\theta, \bm)]_{q_1, \frac{N}{2q_1}, t}\nonumber \\
&\enskip +<s>^{-(\frac{3N}{2q_1}+\frac{1}{2})}
[(\theta, \bm)]_{\infty, \frac{N}{q_1}, t}
[(\nabla \theta, \nabla \bm)]_{q_1, \frac{N}{2q_1}+\frac{1}{2}, t}\nonumber \\
& \enskip + <s>^{-(\frac{3N}{2q_1}+1)}
[\nabla \theta]_{\infty, \frac{N}{q_1}+\frac{1}{2}, t}
[(\nabla \theta, \nabla \bm)]_{q_1, \frac{N}{2q_1}+\frac{1}{2}, t},\label{A3} \\
B_3 &\leq
<s>^{-(\frac{3N}{2q_1}-\tau)}
[\theta]_{\infty, \frac{N}{q_1}, t}
<s>^{\frac{N}{2q_1}-\tau}
\|\theta\|_{W^2_{q_1}(\R^N)}.\label{B3}
\end{align}
Since $1 - (j/2)p' > 0$,
and $3N/2q_1 - \tau >N/2q_1 + j/2$ as follows 
from $p > 2$ and
$\tau < N/q_2 + 1/p$,
by \eqref{d3}, \eqref{A3} and \eqref{B3}, we have
\begin{align}\label{q13}
I_{q_1}^{3, 0}
&\leq 
C t^{-\frac{3N}{2q_1}} 
\int^t_{t-1} (t - s)^{- \frac{j}{2}}\,ds 
[(\theta, \bm)]_{\infty, \frac{N}{q_1}, t}
[(\theta, \bm)]_{q_1, \frac{N}{2q_1}, t}\nonumber \\
& \enskip 
+ 
C t^{-(\frac{3N}{2q_1}+\frac{1}{2})}
\int^t_{t-1} (t - s)^{- \frac{j}{2}}\,ds 
[(\theta, \bm)]_{\infty, \frac{N}{q_1}, t}
[(\nabla \theta, \nabla \bm)]_{q_1, \frac{N}{2q_1}+\frac{1}{2}, t}\nonumber \\
& \enskip
+
C t^{-(\frac{3N}{2q_1}+1)} 
\int^t_{t-1} (t - s)^{-\frac{j}{2}}\,ds 
[\nabla \theta]_{\infty, \frac{N}{q_1}+\frac{1}{2}, t}
[(\nabla \theta, \nabla \bm)]_{q_1, \frac{N}{2q_1}+\frac{1}{2}, t}\nonumber \\
& \enskip
+
Ct^{-(\frac{3N}{2q_1}-\tau)}
\left(\int^t_{t-1} (t - s)^{- \frac{j}{2} p'}\,ds\right)^{1/p'} 
[\theta]_{\infty, \frac{N}{q_1}, t}
\|<s>^{\frac{N}{2q_1}-\tau}
\theta\|_{L_p((0, t), W^2_{q_1}(\R^N))}\nonumber\\
&\leq C t^{-\frac{N}{2q_1} - \frac{j}{2}} E_1^0(t),
\end{align}
where 
\begin{align*}
E_1^0(t)
&=[(\theta, \bm)]_{\infty, \frac{N}{q_1}, t}
\{[(\theta, \bm)]_{q_1, \frac{N}{2q_1}, t}
+[(\nabla \theta, \nabla \bm)]_{q_1, \frac{N}{2q_1}+\frac{1}{2}, t}\}\\
&\enskip 
+[\nabla \theta]_{\infty, \frac{N}{q_1}+\frac{1}{2}, t}
[(\nabla \theta, \nabla \bm)]_{q_1, \frac{N}{2q_1}+\frac{1}{2}, t}
+[\theta]_{\infty, \frac{N}{q_1}, t}
\|<s>^{\frac{3N}{2q_1}-\tau} \theta\|_{L_p((0, t), W^2_{q_1}(\R^N))}.
\end{align*}
By \eqref{q11} and \eqref{q13}, we have
\begin{equation}\label{q1}
\int^t_0 \|\pd_x ^j S^0 (t - s) (0, \bg(s))\|_{L_{q_1}}\, ds 
\leq C t^{-\frac{N}{2q_1} - \frac{j}{2}} (E_0^0 (t) + E_1^0 (t)).
\end{equation}

\noindent
\underline{\bf Estimates for the low frequency part in $L_{q_2}$}

Using \eqref{infty} and Theorem \ref{semi} (i) with $(p, q) = (q_2, q_1/2)$ and $(p, q) = (q_2, q_2)$, we have
\begin{equation}\label{q2}
\int^t_0 \|\pd_x ^j S^0 (t - s) (0, \bg(s))\|_{L_{q_2}}\, ds 
\leq C t^{-\frac{N}{2q_2} - 1 - \frac{j}{2}} (E_0^0 (t) + E_2^0 (t)).
\end{equation}

\noindent
\underline{\bf Estimates for the high frequency part}

Employing the same calculation as in Step1, 
we have estimates for the high frequencies under the conditions \eqref{condi:pq} and \eqref{tau} as follows.
\begin{align}\label{highfre}
&\int^t_0 \|\pd_x ^j S^\infty (t - s) (0, \bg(s))\|_{L_\infty}\, ds 
\leq C t^{-\frac{N}{q_1} - \frac{j}{2}} (E_0^\infty (t) + E_2^\infty (t)),\nonumber \\
&\int^t_0 \|\pd_x ^j S^\infty (t - s) (0, \bg(s))\|_{L_{q_1}}\, ds 
\leq C t^{-\frac{N}{2q_1} - \frac{j}{2}} (E_0^\infty (t) + E_1^\infty (t)),\\
&\int^t_0 \|\pd_x ^j S^\infty (t - s) (0, \bg(s))\|_{L_{q_2}}\, ds 
\leq C t^{-\frac{N}{2q_2} - 1 - \frac{j}{2}} (E_0^\infty (t) + E_2^\infty (t)),\nonumber 
\end{align}
where
\begin{align*}
E_0^\infty(t) &= [(\theta, \bm)]_{q_1, \frac{N}{2q_1}, t} [(\nabla \theta, \nabla \bm)]_{q_1, \frac{N}{2q_1}+\frac{1}{2}, t}\\
& \enskip
+[\nabla \theta]_{\infty, \frac{N}{q_1}+\frac{1}{2}, t} [\bm]_{q_1, \frac{N}{2q_1}, t}^2
+[\nabla \theta]_{q_1, \frac{N}{2q_1}+\frac{1}{2}, t} [\nabla \bm]_{q_1, \frac{N}{2q_1}+\frac{1}{2},t}\\
& \enskip
+[\theta]_{q_1, \frac{N}{2q_1}, t} \|<s>^{\frac{N}{2q_1}-\tau} \theta\|_{L_p((0, t), W^3_{q_1}(\BR^N))}\\
& \enskip
+([\bm]_{q_1, \frac{N}{2q_1}, t} + [\nabla \theta]_{q_1, \frac{N}{2q_1}+\frac{1}{2}, t}) 
\|<s>^{\frac{N}{2q_1}-\tau} \theta\|_{L_p((0, t), W^2_{q_1}(\BR^N))},\\
E_1^\infty(t) &= [(\theta, \bm)]_{\infty, \frac{N}{q_1}, t} [(\nabla \theta, \nabla \bm)]_{q_1, \frac{N}{2q_1}+\frac{1}{2}, t}\\
& \enskip
+[\nabla \theta]_{q_1, \frac{N}{2q_1}+\frac{1}{2}, t} [\bm]_{\infty, \frac{N}{q_1}, t}^2
+[\nabla \theta]_{\infty, \frac{N}{q_1}+\frac{1}{2}, t} [\nabla \bm]_{q_1, \frac{N}{2q_1}+\frac{1}{2},t}\\
& \enskip
+[\theta]_{\infty, \frac{N}{q_1}, t} \|<s>^{\frac{N}{2q_1}-\tau} \theta\|_{L_p((0, t), W^3_{q_1}(\BR^N))}\\
& \enskip
+([\bm]_{\infty, \frac{N}{q_1}, t} + [\nabla \theta]_{\infty, \frac{N}{q_1}+\frac{1}{2}, t}) 
\|<s>^{\frac{N}{2q_1}-\tau} \theta\|_{L_p((0, t), W^2_{q_1}(\BR^N))},\\
E_2^\infty(t) &= [(\theta, \bm)]_{\infty, \frac{N}{q_1}, t} [(\nabla \theta, \nabla \bm)]_{q_2, \frac{N}{2q_2}+\frac{3}{2}, t}\\
& \enskip
+[\nabla \theta]_{q_2, \frac{N}{2q_2}+\frac{3}{2}, t} [\bm]_{\infty, \frac{N}{q_1}, t}^2
+[\nabla \theta]_{\infty, \frac{N}{q_1}+\frac{1}{2}, t} [\nabla \bm]_{q_2, \frac{N}{2q_2}+\frac{3}{2},t}\\
& \enskip
+[\theta]_{\infty, \frac{N}{q_1}, t} \|<s>^{\frac{N}{2q_2}+1-\tau} \theta\|_{L_p((0, t), W^3_{q_2}(\BR^N))}\\
& \enskip
+([\bm]_{\infty, \frac{N}{q_1}, t} + [\nabla \theta]_{\infty, \frac{N}{q_1}+\frac{1}{2}, t}) 
\|<s>^{\frac{N}{2q_2}+1-\tau} \theta\|_{L_p((0, t), W^2_{q_2}(\BR^N))}.
\end{align*}

By \eqref{duhamel}, \eqref{infty4}, \eqref{q1}, \eqref{q2} and \eqref{highfre},
we have
\begin{align}\label{2t}
&\sum^1_{j=0} [(\nabla^j \omega, \nabla^j \bw)]_{\infty, \frac{N}{q_1}+\frac{j}{2}, (2, t)}
\leq C (\|(\rho_0, \bm_0)\|_{W^{1, 0}_\frac{q_1}{2}} + \|(\rho_0, \bM_0)\|_{L_\frac{q_1}{2}}
+E_0(t) + E_2(t)), \nonumber\\
&\sum^1_{j=0} [(\nabla^j \omega, \nabla^j \bw)]_{q_1, \frac{N}{2q_1}+\frac{j}{2}, (2, t)}
\leq C (\|(\rho_0, \bm_0)\|_{W^{1, 0}_\frac{q_1}{2}} + \|(\rho_0, \bM_0)\|_{L_\frac{q_1}{2}}
+E_0(t) + E_1(t)),\\
&\sum^1_{j=0} [(\nabla^j \omega, \nabla^j \bw)]_{q_2, \frac{N}{2q_2}+1+\frac{j}{2}, (2, t)}
\leq C (\|(\rho_0, \bm_0)\|_{W^{1, 0}_\frac{q_1}{2}} + \|(\rho_0, \bM_0)\|_{L_\frac{q_1}{2}}
+E_0(t) + E_2(t)), \nonumber
\end{align}
where
$E_i(t) = E_i^0(t) + E_i^\infty(t)$ with $i = 0, 1, 2$.

\subsubsection{In the case that $0<t<2$}

\noindent
\underline{\bf Estimates in $L_{q_i}$ for $i=1, 2$}

Using Theorem \ref{lmr} and the following estimate 
\[
\|\bg\|_{L_p((0, t), L_{q_i}(\BR^N))} \leq C E_i^\infty (t),
\]
which is calculated
in \ref{t>2},
so that we have
\begin{align}\label{mr2}
&\|(\omega, \bw)\|_{L_p((0, 2), W^{3, 2}_{q_i}(\BR^N))} 
+ \|(\pd_s \omega, \pd_s \bw)\|_{L_p((0, 2), W^{1, 0}_{q_i}(\BR^N))}\nonumber \\
&\leq C\{ \|(\rho_0, \bm_0)\|_{D_{q_i, p} (\BR^N)} + E_i (2) \}
\end{align}
for $i = 1, 2$. 

\noindent
\underline{\bf Estimates in $L_\infty$}

In order to estimate in $L_\infty$, we use the following Lemma. 
(cf. Lemma 1 in \cite{SS0} and Lemma 3.3 in \cite{MS})

\begin{lem}\label{sup}
Let $\bu \in W^1_p((0, T), L_q(\BR^N)^N) \cap L_p((0, T), W^2_q(\BR^N)^N)$
and
$\rho \in W^1_p((0, T), W^1_q(\BR^N)) \cap L_p((0, T), W^3_q(\BR^N))$,
with $2 < p < \infty$, $1 < q < \infty$ and $T > 0$.
Then,
\begin{equation}\label{supq}
\sup_{0 < s <T} \|(\rho (\cdot, s), \bu (\cdot, s))\|_{D_{q, p}(\BR^N)}
\leq C\{\|(\rho (\cdot, 0), \bu (\cdot, 0))\|_{D_{q, p}(\BR^N)} + E_{p, q} (\rho, \bu) (T)\}
\end{equation}
with the constant $C$ independent of $T$.

If we assume that
$2/p + N/q <1$ in addition, then
\begin{equation}\label{supinfty}
\sup_{0 < s <S} \|(\rho (\cdot, s), \bu (\cdot, s))\|_{W^{2, 1}_\infty(\BR^N)}
\leq C\{\|(\rho (\cdot, 0), \bu (\cdot, 0))\|_{D_{q, p}(\BR^N)} + E_{p, q} (\rho, \bu) (S)\}
\end{equation}
for any $S \in (0, T)$ with the constant $C$ independent of $S$ and $T$.
\end{lem}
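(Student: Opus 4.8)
The plan is to read \eqref{supq} as the trace embedding associated with the maximal $L_p$--$L_q$ regularity class, and then to obtain \eqref{supinfty} from \eqref{supq} by a Sobolev-type embedding. The key observation is that $D_{q,p}(\BR^N)$ is precisely the real interpolation space at the time trace: for the velocity component,
\[
(L_q(\BR^N),W^2_q(\BR^N))_{1-\frac1p,p}=B^{2(1-1/p)}_{q,p}(\BR^N),
\]
and for the density component,
\[
(W^1_q(\BR^N),W^3_q(\BR^N))_{1-\frac1p,p}=B^{3-2/p}_{q,p}(\BR^N).
\]
Hence, once $\bu\in W^1_p((0,T),L_q)\cap L_p((0,T),W^2_q)$ and $\rho\in W^1_p((0,T),W^1_q)\cap L_p((0,T),W^3_q)$, the general trace theory already yields $(\rho(\cdot,s),\bu(\cdot,s))\in D_{q,p}(\BR^N)$ with continuous dependence on $s$; the substantive part of \eqref{supq} is only the $T$-uniformity of the constant $C$.

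To secure this $T$-uniformity I would reduce to the whole line, on which the trace estimate
\[
\sup_{t\in\BR}\|w(t)\|_{(X_0,X_1)_{1-1/p,p}}\le C\,\|w\|_{W^1_p(\BR,X_0)\cap L_p(\BR,X_1)}
\]
holds with a universal constant. The passage from $(0,T)$ to $\BR$ is by a $T$-independent extension built from three fixed pieces: to the left of $0$ and far to the right one extends by an analytic-semigroup profile $\chi(t)e^{-|t|A}$ applied to the initial value $(\rho,\bu)(\cdot,0)$, whose maximal-regularity norm is controlled by $\|(\rho,\bu)(\cdot,0)\|_{D_{q,p}(\BR^N)}$ (here $A$ is any operator with domain $X_1$ generating an analytic semigroup on $X_0$, e.g.\ $I-\Delta$), while the intermediate piece is the reflection of $(\rho,\bu)$ across $t=T$, whose norm equals that on $(0,T)$. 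Since each piece carries a $T$-independent bound, applying the whole-line estimate to the two couples above and summing gives \eqref{supq}. This extension step, rather than the interpolation bookkeeping, is where I expect the real work to lie, and it is exactly what is furnished by Lemma~1 in \cite{SS0}; it also explains the appearance of the initial-value term on the right-hand side of \eqref{supq}.

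For \eqref{supinfty} I would simply compose \eqref{supq} with the Besov embeddings that become available under the extra hypothesis $2/p+N/q<1$. This condition is equivalent to $2(1-1/p)-N/q>1$ and to $(3-2/p)-N/q>2$, so the Sobolev-type embeddings $B^{2(1-1/p)}_{q,p}(\BR^N)\hookrightarrow W^1_\infty(\BR^N)$ and $B^{3-2/p}_{q,p}(\BR^N)\hookrightarrow W^2_\infty(\BR^N)$ hold, i.e.\ $D_{q,p}(\BR^N)\hookrightarrow W^{2,1}_\infty(\BR^N)$. Applying \eqref{supq} on the interval $(0,S)$ and then this embedding pointwise in $s$ gives
\[
\sup_{0<s<S}\|(\rho(\cdot,s),\bu(\cdot,s))\|_{W^{2,1}_\infty(\BR^N)}
\le C\{\|(\rho(\cdot,0),\bu(\cdot,0))\|_{D_{q,p}(\BR^N)}+E_{p,q}(\rho,\bu)(S)\}
\]
with $C$ independent of $S$ and $T$, which is \eqref{supinfty}. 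The only point to verify here is the sharpness of the Besov indices, which is guaranteed by the strict inequality $2/p+N/q<1$.
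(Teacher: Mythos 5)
Your proof is correct. The paper does not prove Lemma \ref{sup} itself but quotes it from Lemma 1 of \cite{SS0} and Lemma 3.3 of \cite{MS}, and your argument---the real-interpolation trace identities $(L_q,W^2_q)_{1-1/p,p}=B^{2(1-1/p)}_{q,p}$ and $(W^1_q,W^3_q)_{1-1/p,p}=B^{3-2/p}_{q,p}$, a $T$-independent extension of the solution to the whole line built from semigroup profiles at the initial value and reflection across $t=T$, followed by the embeddings $B^{3-2/p}_{q,p}\hookrightarrow W^2_\infty(\BR^N)$ and $B^{2(1-1/p)}_{q,p}\hookrightarrow W^1_\infty(\BR^N)$ under $2/p+N/q<1$---is essentially the standard argument used in those references.
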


By Lemma \ref{sup}, we have 
\begin{align}\label{embedd}
\|(\omega, \bw)\|_{L_\infty((0, 2), W^1_\infty(\BR^N))}
&\leq C \{\|(\rho_0, \bm_0)\|_{D_{q_2, p} (\BR^N)} + E_2(2)\}.
\end{align}

\subsubsection{Conclusion}

Combining \eqref{2t}, \eqref{mr2} and \eqref{embedd},
we have
\begin{align}\label{low}
&\sum^1_{j = 0}[(\nabla^j \omega, \nabla^j \bw)]_{\infty, \frac{N}{q_1} + \frac{j}{2}, (0, t)} 
\leq C (\CI + E_0 (t) + E_2 (t)), \nonumber \\
&\sum^1_{j = 0}[(\nabla^j \omega, \nabla^j \bw)]_{q_1, \frac{N}{2q_1} + \frac{j}{2}, (0, t)} 
\leq C (\CI + E_0 (t) + E_1 (t)),\\
&\sum^1_{j = 0}[(\nabla^j \omega, \nabla^j \bw)]_{q_2, \frac{N}{2q_2} + 1 + \frac{j}{2}, (0, t)} 
\leq C (\CI + E_0 (t) + E_2 (t)).\nonumber
\end{align}

\subsection{Estimates of the weighted norm in the maximal $L_p$-$L_q$ regularity class}
In order to estimate the weighted norm 
in the maximal $L_p$-$L_q$ regularity class,
we consider the following time shifted equations, 
which is equivalent to the first and the second equations of \eqref{nsk5}:
\begin{align*}
&\pd_s ( <s>^{\ell_i} \omega) 
+ \delta_0 <s>^{\ell_i} \omega 
+ \rho_* \dv (<s>^{\ell_i} \bw)\\ 
&= \delta_0 <s>^{\ell_i} \omega 
+ (\pd_s <s>^{\ell_i}) \omega \\
&\pd_s (<s>^{\ell_i} \bw) 
+ \delta_0<s>^{\ell_i} \bw 
- \alpha_* \Delta (<s>^{\ell_i} \bw) 
- \beta_* \nabla (\dv <s>^{\ell_i} \bw) \\
& \enskip + \kappa_* \nabla \Delta <s>^{\ell_i} \omega 
- \gamma_* \nabla <s>^{\ell_i} \omega \\
&= <s>^{\ell_i} \bg (\theta, \bm)  
+ \delta_0 <s>^{\ell_i} \bw 
+ (\pd_s <s>^{\ell_i})\bw,
\end {align*}
where $i=1, 2$, $\ell_1 = N/2q_1 - \tau$ and $\ell_2 = N/2q_2 + 1 - \tau$.
By Theorem \ref{lmr}, we have
\begin{align}\label{high}
&\| <s>^{\ell_i} (\omega, \bw)\|_{L_p((0, t), W^{3, 2}_{q_i}(\BR^N))}
+ \| <s>^{\ell_i} (\pd_s \omega, \pd_s \bw)\|_{L_p((0, t), W^{1, 0}_{q_i}(\BR^N))}\nonumber \\
&\leq C (\|(\rho_0, \bm_0)\|_{D_{q_i, p} (\BR^N)}
+ \|<s>^{\ell_i} \bg (\theta, \bm) \|_{L_p((0, t), L_{q_i}(\BR^N))} \nonumber\\
&+ \|<s>^{\ell_i} (\omega, \bw)\|_{L_p((0, t), W^{1, 0}_{q_i}(\BR^N))}
+ \|(\pd_s <s>^{\ell_i})(\omega, \bw)\|_{L_p((0, t), W^{1, 0}_{q_i}(\BR^N))}.
\end{align}
We can estimate the left-hand sides 
of \eqref{high} by the same calculation
as in \cite{MS}, we have
\begin{align}\label{high1}
&\| <s>^{\ell_i} (\omega, \bw)\|_{L_p((0, t), W^{3, 2}_{q_i}(\BR^N))}
+ \| <s>^{\ell_i} (\pd_s \omega, \pd_s \bw)\|_{L_p((0, t), W^{1, 0}_{q_i}(\BR^N))}\nonumber \\
&\leq C (\CI + E_0(t) + E_i (t)).
\end{align}

\subsection{Conclusion}
Combining \eqref{low} and \eqref{high1}, we have
\eqref{extend}.
Recalling that $\CI \leq \epsilon$, for $(\theta, \bm) \in \CI_\epsilon$,
we have
\begin{equation}\label{extend*}
\CN(\omega, \bw)(\infty) \leq C(\CI + \CN(\theta, \bm)(\infty)^2)
\leq C\epsilon + CL^2\epsilon^2.
\end{equation}
Choosing $\epsilon$ so small that  
$L^2 \epsilon \leq 1$ 
and setting $L = 2C$
in \eqref{extend*},
we have 
\begin{equation}\label{est:global}
\CN(\omega, \bw)(\infty) \leq  L\epsilon.
\end{equation}
We define a map $\Phi$ acting on $(\theta, \bm) 
\in \CI_\epsilon$ by $\Phi(\theta, \bm) = (\omega, \bw)$, 
and then it follows from \eqref{est:global}
that $\Phi$ is the map from $\CI_\epsilon$ into itself.
Considering the difference
$\Phi(\theta_1, \bm_1) - \Phi(\theta_2, \bm_2)$
for $(\theta_i, \bm_i) \in \CI_\epsilon$ $(i = 1, 2)$,
employing the same argument as in the proof of \eqref{extend*}
and choosing $\epsilon > 0$
samller if necessary,
we see that $\Phi$ is a consraction map on $\CI_\epsilon$,
and therefore there exists a fixed point $(\omega, \bw) \in \CI_\epsilon$
which solves the equation \eqref{nsk4}.
Since the existence of solutions to \eqref{nsk4} is proved 
by the contraction mapping principle,
the uniqueness of solutions belonging to $\CI_\epsilon$ follows immediately,
which completes the proof of Theorem \ref{global}.


\begin{thebibliography}{9}
{\small 

\bibitem{BYZ} D.~Bian, L.~Yao, and C.~Zhu, 
 {\it Vanishing capillarity limit of the compressible 
fluid models of
Korteweg type to the Navier-Stokes equations}, 
SIAM J. Math. Anal.,
{\bf 46 (2)} (2014) 1633--1650.

\bibitem{Bourgain} J.~Bourgain, {\it Vector-valued singular 
integrals and the $H^1$-BMO duality}, In: Probability Theory and 
Harmonic Analysis, D.~Borkholder (ed.) {\it Marcel Dekker, 
New York} (1986) 1--19. 


\bibitem{BDL} D.~Bresch, B.~Desjardins and C.~K.~Lin, 
 {\it On some compressible fluid models: Korteweg, lubrication
and shallow water systems}, 
Comm. Partial Differential Equations,
{\bf 28} (2003) 843--868.

\bibitem{CZ} Z.~Chen and H.~Zhao, 
 {\it Existence and nonlinear stability of stationary solutions to the full com-
pressible Navier-Stokes-Korteweg system}, 
J. Math. Pures Appl. (9), {\bf 101(3)}
(2014) 330--371.

\bibitem{CK}N.~Chikami and T.~Kobayashi,
{\it Global well-posedness and time-decay estimates of the compressible Navier-Stokes-Korteweg system in critical Besov spaces},
 J. Math. Fluid Mech. {\bf 21} (2019), no. 2, Art. 31.


\bibitem{DD} R.~Danchin, B.~Desjardins, 
 {\it Existence of solutions for compressible fluid models of Korteweg
type}, 
Ann. Inst. Henri Poincare Anal. Nonlinear,
{\bf 18} (2001) 97--133.

\bibitem{D}J.~Daube, 
{\it Sharp-Interface Limit for the Navier-Stokes-Korteweg Equations}, 
Doktorarbeit, Universitat Freiburg, 2017.

\bibitem{DS} J.~E.~Dunn and J.~Serrin, 
 {\it On the thermomechanics of interstital working}, 
Arch. Ration. Mech. Anal.,
{\bf 88} (1985) 95--133.

\bibitem{H} B.~Haspot, 
 {\it Existence of global weak solution for compressible fluid models of Korteweg type}, 
J. Math. Fluid Mech.,
{\bf 13} (2011) 223--249.

\bibitem{HL1} H.~Hattori, D.~Li,
 {\it Solutions for two dimensional systems for materials of Korteweg type}, SIAM
J. Math. Anal., {\bf 25} (1994) 85--98.
\bibitem{HL2} H.~Hattori, D.~Li,
 {\it Golobal solutions of a high dimensional systems for Korteweg materials}, 
J. Math. Anal. Appl., {\bf 198} (1996) 84--97.
\bibitem{HPZ} X.~Hou, H.~Peng and C.~Zhu,
 {\it Global classical solutions to the 3D Navier-Stokes-Korteweg
equations with small initial energy},
Anal. Appl., {\bf 16 (1)} (2018) 55--84.



\bibitem{KT} T.~Kobayashi and K.~Tsuda,
 {\it 
Global existence and time decay estimate of solutions to the compressible Navier-StokesKorteweg system under critical condition}.
To appear in Asymptotic
Analysis, 2020, arXiv:1905.03542.
\bibitem{K} D.~J.~Korteweg,
 {\it Sur la forme que prennent les \'equations du mouvement des fluides si lfon tient compte des forces capillaires caus\'ees par des variations de densit\'e consid\'erables mais continues et sur la th\'eorie de la capillarite dans lfhypoth\'ese dfune variation continue de la densit\'e},
Archives N\'eerlandaises des sciences exactes et naturelles, (1901) 1--24. 


\bibitem{L} Y.~P.~Li, 
 {\it Global existence and optimal decay rate for the compressible Navier-Stokes-Korteweg
equations with external force}, 
J. Math. Anal. Appl., {\bf 388} (2012) 1218--1232.
\bibitem{MS} M.~Murata and Y.~Shibata,
The global well-posedness for the compressible 
fluid model of Korteweg type,
arXiv:1908.07224, 2019.

\bibitem{Sa} H.~Saito,
{\it On the maximal $L_p$-$L_q$ regularity
for a compressible fluid model
of Korteweg type on general domains},
J.Differential Equations, {\bf 268 (6)} (2020) 2802--2851.



\bibitem{SS0} M.~Schonbek and Y.~Shibata, 
{\it On the global well-posedness of strong dynamics of incompressible nematic liquid crystals in 
$\BR^N$},
J. Evol. Equ. {\bf 17 (1)} (2017) 537--550.





\bibitem{TW} Z.~Tan, H.~Q.~Wang,
{\it Large time behavior of solutions to the isentropic 
compressible fluid
models of Korteweg type in $\BR^3$}, 
Commun. Math. Sci.
{\bf 10 (4)} (2012) 1207--1223.
\bibitem{TWX} Z.~Tan, H.~Q.~Wang, and J.~K.~Xu,
{\it Global existence and optimal $L^2$ decay rate for the strong
solutions to the compressible 
fluid models of Korteweg type}, 
J. Math. Anal. Appl.,
{\bf 390} (2012) 181--187.

\bibitem{TZ} Z.~Tan and R.~Zhang,
{\it Optimal decay rates of the compressible 
fluid models of Korteweg type}, 
Z. Angew. Math. Phys.
{\bf 65} (2014) 279--300.
\bibitem{Wa}  J.~D.~Van der Waals,
{\it Th\'eorie thermodynamique de la capillarit\'e, dans lfhypoth\'ese dfune variation continue de la densit\'e. }, 
Archives N\'eerlandaises des sciences exactes et naturelles
{\bf XXVIII} (1893) 121--209.
\bibitem{WT} Y.~J.~Wang, Z.~Tan, {\it Optimal decay rates for the compressible fluid models of Korteweg type}, 
J. Math. Anal. Appl., {\bf 379} (2011) 256--271.
\bibitem{W} K.~Watanabe, 
Global existence of the Navier-Stokes-Korteweg equations with a
non-decreasing pressure in $L^p$
- framework,
arXiv:1907.07752, 2019.
}

\end{thebibliography}
\end{document}